\def\changes#1{{\color{black}#1}}
\def\chg#1{{\color{black}#1}}
\newtheorem{thm}{Theorem}
\newtheorem{cor}[thm]{Corollary}
\newtheorem{conject}[thm]{Conjecture}
\newtheorem{defn}{Definition}
\newtheorem{prop}[thm]{Proposition}
\newtheorem{remark}{Remark}
\newcommand\mb[1]{\mbox{#1}}
\newcommand{\G}{\mathcal{G}}
\newcommand{\V}{\mathcal{V}}
\newcommand{\E}{\mathcal{E}}
\newcommand{\graph}{ \G = \left( \V , \E \right) }
\newcommand{\id}{\mathds{I}}
\newcommand{\1}{\mathbf{1}}
\newcommand{\footnoteremember}[2]{\footnote{#2}
\newcounter{#1}
\setcounter{#1}{\value{footnote}}}
\newcommand{\footnoterecall}[1]{\footnotemark[\value{#1}]}
\title{Graph diameter, eigenvalues, and minimum-time consensus\footnote{The authors would like to thank P. Van Dooren for the enlightening discussions they had with him. This paper presents research
results of the Belgian Network DYSCO (Dynamical Systems, Control, and Optimization), funded by the Interuniversity
Attraction Poles Programme, initiated by the Belgian State, Science Policy Office, \chg{and the Concerted
Research Action (ARC) of the French Community of Belgium.} \chg{This work has benefitted from many discussions within the FP7 NoE HYCON2.} The scientific responsibility rests
with its authors. R. M. J. is a F.R.S.-FNRS Research Associate, G. V. is a F.R.I.A. Fellow.}}%\thanksref{footnoteinfo}} % Title, preferably not more 
\author{J. M. Hendrickx\footnoteremember{ICTEAM}{ICTEAM Institute at the Universit\'e catholique de Louvain, Avenue Georges
Lemaitre 4, B-1348 Louvain-la-Neuve, Belgium; \{julien.hendrickx, raphael.jungers, guillaume.vankeerberghen\}@uclouvain.be.}, 
R. M. Jungers\footnoterecall{ICTEAM} , 
A. Olshevsky\footnoteremember{dont}{Department of Industrial and Enterprise Systems Engineering, University of Illinois at Urbana-Champaign, Urbana, IL, 61801, USA; aolshev2@illinois.edu.}, 
G. Vankeerberghen\footnoterecall{ICTEAM} }
\begin{document}

\maketitle

\begin{abstract}  We consider the problem of achieving average consensus \changes{in the minimum number of linear iterations} on a fixed, undirected
graph. We are motivated \changes{by the task of deriving lower bounds for consensus protocols} and by the so-called ``definitive consensus conjecture'' which states that for an undirected connected graph $\G$ with
diameter $D$ there exist $D$ matrices whose nonzero-pattern complies with the edges in $\G$ and whose product equals the all-ones matrix. 
Our first result is a counterexample to the definitive consensus conjecture, \changes{which is the first improvement of the diameter lower bound for linear
consensus protocols}.  We then provide some algebraic conditions under which this conjecture holds, which we use to establish that all distance-regular graphs satisfy the definitive consensus conjecture.  
\end{abstract}
\section{Introduction}

Consensus algorithms are a class of iterative update schemes commonly used as building blocks for distributed \chg{estimation and}
control protocols. The \chg{progresses} made within recent years in analyzing and designing consensus algorithms have led to advances
in a number of fields, for example, \chg{distributed estimation and} inference in sensor networks \chg{\cite{ItalDKF,Xiao:2007cr,XiaoBoydSensor}},
distributed optimization \cite{4749425}, and distributed machine learning \cite{Ang:2008hc}. These are among the many
subjects that have benefitted from the use of consensus algorithms. \

One of the available methods to design an average consensus algorithm is to use constant update weights satisfying some conditions for convergence (as can be found in \cite{BlondelTsitsi} for instance). However, the associated rate of convergence might be a limiting factor, 
and this has spanned a literature dedicated to optimizing the speed of consensus algorithms. For example, 
\changes{in the discrete-time case in which we are interested,} recent work has studied optimizing the spectral gap of the stochastic update matrix \cite{Boyd:2004ve,Xiao200465} or choosing an optimal network structure \cite{Delvenne:2007qf}. 

Other recent work has focused on achieving average consensus in finite time. For instance,  \cite{ODCoUUD} shows that if the interaction network is given by a fixed undirected graph (\chg{about which the agents initially only know their neighbors and the number of nodes}) and if the agents have total recall of the previously sent and received information, then a control scheme that is optimal in time can be found. Again on fixed undirected networks, it is shown in \cite{4497790} that any node can compute any function of the initial values after using almost any constant weights for a certain finite number of iterations given that the agents have either the memory of their past values or one dedicated register of memory, \chg{and given that some parameters are set at design time based on the network and the weights to be used}. Moreover, \cite{4497790} presents a decentralized way of computing the required parameters if the nodes have sufficient memory, the computing power to check rank conditions on matrices, and know a bound on the total number of nodes. \chg{These results are further improved and extended in \cite{6161213}} by providing a decentralized procedure \chg{using} less iterations and \chg{not} requiring the agents to know a bound on the number of agents. However, this last fact implies that the procedure only works \chg{for} almost all initial conditions. \chg{Finally,} \cite{kibangou:hal-00735058} also treats the problem of computing the consensus value after a finite number of constant-weight steps when the agents have memory and computing power. It treats this problem in the context of decentralized Laplacian eigenvalues estimation and topology identification.

In this paper, we study \chg{finite-time} consensus in \chg{the setting} where nodes only memorize their current state and, as before, update it as a linear function of their state and the ones of their neighbors \chg{but, the arbitrary time-varying weights can be chosen at the time of design}. \changes{Then, the problem is to pick update matrices at design time, so that average consensus is achieved in finite-time. This setting has been investigated in \cite{ChiKai} with the added restriction to use left-stochastic matrices corresponding to different types of consensus algorithms, e.g., various forms of pairwise exchanges like gossip matrices. Algorithms and asymptotic results, mainly based on using spanning trees, are proposed there. \cite{5026/THESES} treats the same question when the matrices \chg{do not have} to be left-stochastic \chg{nor} have to correspond to a known consensus algorithm. It introduces a Gather and Distribute scheme for trees \cite[Section 4.2]{5026/THESES}, which is also similar to \cite[Algorithm 3.3]{ChiKai} or the ConvergeCast algorithm, that shows how to reach finite-time consensus on any tree or on any graph based on a spanning tree. \cite{5026/THESES} finally conjectures that on any connected undirected graph, it is possible to design consensus schemes which take only as long as the diameter of the underlying graph to achieve average consensus. }

\changes{The conjecture just cited is the starting point of our developments. That is,} \changes{we are concerned with picking update matrices,} \changes{at design time,} \changes{so that average consensus is achieved in the fastest possible time. Furthermore, we are concerned with deriving lower bounds on such consensus schemes.}

\changes{After posing the problem and presenting previous results in Section \ref{sec:problemsetting}, we answer the conjecture with a counterexample in Section \ref{sec:counterEx}.} \changes{This proves that on some graphs, consensus may not be achieved by linear consensus protocols  as fast as the diameter of the graph. We note that this is the first time the diameter lower bound on linear consensus protocols has been improved for any graph.} 

In Section \ref{sec:suffcond}, we provide some algebraic conditions under which the definitive consensus conjecture holds and show that these conditions \chg{are easily met} on all distance-regular graphs.  \changes{In particular, this implies that graphs which possess a lot of symmetry automatically satisfy the conjecture; we will make
this statement precise in that section}.

Finally, in Section \ref{last}, we show how our conditions can be used to find a number (possibly larger than the diameter) of matrices compliant with the graph that lead to average consensus and we introduce the notion of consensus number of a graph. \changes{Our results in Section \ref{last} are related with those obtained independently in \cite{Kibang,6315398}. In these references the author does not try to minimize the running time of the consensus protocols, but provides a general method for obtaining consensus in finite time. This method can be seen as a particular case of Corollary \ref{cor:beyondconj} in \chg{the present} paper and is further discussed at that point.} 

\section{Problem setting and previous results \label{sec:problemsetting}}
\subsection{Problem setting}
 \changes{For ease of exposition, we only consider undirected graphs and, when some results can be easily extended to digraphs, we explicitly mention it.}

Given a connected graph $\graph$, we define the \emph{distance},  $\delta(i,j)$, separating two nodes $i, j \in \V$ as the number of edges in a shortest-path between $i$ and $j$. Following this definition, the \emph{diameter}, $D(\G)$, of a graph corresponds to the maximum distance between two nodes, i.e., $D(\G) = \max_{i,j \in \V} \delta(i,j)$.
Another linked parameter is the \emph{eccentricity}, $\epsilon(i)$, of a node $i$. This is the maximum distance separating $i$ from another node, i.e., $\epsilon(i) = \max_{j \in \V} \delta(i,j)$.
The \emph{radius} $R(\G)$ of a graph is the minimum eccentricity over all nodes, i.e., $ R(\G) = \min_{i \in \V} \epsilon(i) = \min_{i \in \V} \left( \max_{j \in \V} \delta(i,j) \right)$.
Finally, a node is called \emph{central} if its eccentricity equals the radius of the graph. \chg{More details on these notions can be found in \cite{WestGraph}.}

Our setting is that of a fixed interaction network encoded by an undirected graph $\graph$. Agents are allowed to store only their current state and to synchronously update it to a linear combination of their state and the states of their neigbors. Hence, if $x_{i}^{(t)}$ denotes the state of agent $i$ at time $t$ and $\mathcal{N}_{i} \subset \V$ denotes the set of agents with which $i$ can communicate then the updated state of agent $i$ is $x^{(t+1)}_{i} = a_{ii}^{(t+1)}x^{(t)}_{i} + \sum_{j \in \mathcal{N}_{i}} a_{ij}^{(t+1)} x_{j}^{(t)}$ for some choice of weights $a^{(t)}_{ij}$. More compactly,  a synchronous linear update can be written as $\mathbf{x}^{(t+1)} = A^{(t+1)}\mathbf{x}^{(t)}$ where the states of the agents at time $t$ are in the column vector $\mathbf{x}^{(t)}$ and the weights $a_{ij}^{(t+1)}$ are the entries of $A^{(t+1)}$. The matrices $A^{(t)}$ are required to comply with the underlying graph $\graph$ in the
following sense. 

\begin{defn}
Given a graph $\graph$ on $N$ nodes, we define \changes{the set of matrices that comply with $\G$ as }
$\mathcal{M}(\G) = \left\{ A = [a_{ij}] \in \mathds{R}^{N \times N} \left|   i\neq j \, \, \mathrm{and} \,\, (i,j) \notin \E \right. \right.
 \left. \Rightarrow a_{ij} = 0  \right\}$.
 \end{defn}
We say that a network of agents has reached \emph{average consensus} at time $t^{*}$ if the state of each agent is equal to the average of the initial states, i.e., if $\mathbf{x}^{(t^*)} = (\frac{1}{N} \1^{T} \mathbf{x}^{(0))}) \1 =  \overline{\mathbf{x}^{(0)}} \1$ with $\1$ the column vector of ones. This is the case for any initial vector $\mathbf{x}^{(0)}$ if and only if $A^{(t^*)}\cdots A^{(1)} = {1\over N} \1 \1^{T}$.

In Sections \ref{sec:counterEx} and \ref{sec:suffcond} we answer and analyze the following conjecture.
\begin{conject}[definitive consensus conjecture, \cite{5026/THESES}]\label{defconsconj2} 
For any connected graph $\G$ on $N$ vertices, there exist $D(\G)$ matrices $\mb{A}^{(t)}$ that comply with the graph and are such that 
\begin{eqnarray} \label{eq:prodEq} \mb{A}^{\left( D(\G) \right)} \mb{A}^{ \left( D(\G)-1 \right)} \cdots \mb{A}^{(1)} = {1 \over N} \1 \1^{T} \, , \end{eqnarray}
so that $\mb{A}^{\left( D(\G) \right)} \mb{A}^{ \left( D(\G)-1 \right)} \cdots \mb{A}^{(1)} \mathbf{x}^{(0)} = \overline{\mathbf{x}^{(0)}} \1$. 
\end{conject}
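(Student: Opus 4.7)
My plan is to attempt a constructive proof of the conjecture. A natural first approach is the gather-and-distribute scheme familiar from the tree case, but that already requires $2R(\G) \ge D(\G)$ steps, which is too many except in the degenerate case $D(\G) = 2R(\G)$. So any proof must exploit the graph's structure more finely. I would start by verifying the bound $D(\G)$ directly on the building blocks where it is known or easy to establish---paths, cycles, complete graphs, stars, complete bipartite graphs---and try to extract a recipe, for instance one that stitches together schemes on subgraphs when $\G$ is decomposed into simpler pieces.

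To turn the general construction into equations, I would work entry-by-entry with the product $P := A^{(D)} A^{(D-1)} \cdots A^{(1)}$. Its $(i,j)$ entry equals the sum, over all walks of length exactly $D$ from $j$ to $i$ whose successive steps are self-loops or edges of $\G$, of the corresponding products of weights. For every pair $(i,j)$ with $\delta(i,j) = D$, any such walk must in fact be a shortest path (there is no room for a detour), which yields sharp multiplicative equations on a small collection of entries. I would parametrize the edge weights along diametral shortest paths and try to satisfy these equations simultaneously, using the greater freedom at shorter distances---and in particular the diagonal weights---to mop up the remaining entries.

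The step I expect to be the main obstacle is the compatibility of these multiplicative equations across diametral pairs that share edges on their (possibly unique) shortest paths, since a single entry $a^{(t)}_{ij}$ may then have to play several rigid roles at once. A cleaner way to phrase the potential obstruction is spectral: the ranks of the partial products $A^{(t)} \cdots A^{(1)}$ must drop from $N$ down to $1$ within exactly $D$ steps, and each drop is forced to occur inside a subspace compatible with the sparsity pattern of the next $A^{(t+1)}$. If the adjacency matrix of $\G$ has many distinct eigenvalues, or an eigenspace that interacts badly with the allowed sparsity, then one of these rank drops may be prevented from happening on time and my construction will fail. Given that the paper's title already flags eigenvalues as central, I would take this as a hint that the conjecture is either tight or actually false precisely in this regime, and I would hedge by simultaneously searching among small graphs whose adjacency spectrum has atypically many distinct eigenvalues relative to the diameter, in the hope of isolating a counterexample.
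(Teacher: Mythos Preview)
The conjecture is \emph{false}, so your constructive programme cannot succeed: the paper does not prove the statement but disproves it, exhibiting a ten-vertex graph $\G_{cx}$ of diameter~$2$ for which no two compliant matrices $A,B$ satisfy $AB=\frac{1}{N}\1\1^{T}$.

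What is striking is that you have already put your finger on the exact obstruction. You write that the main obstacle is ``the compatibility of these multiplicative equations across diametral pairs that share edges on their (possibly unique) shortest paths, since a single entry $a^{(t)}_{ij}$ may then have to play several rigid roles at once.'' That is precisely how the paper's counterexample works: in $\G_{cx}$ several diametral pairs have a \emph{unique} length-$2$ path, which rigidly pins certain products of entries; two of these pinned products then feed additively into a third diametral entry that has \emph{two} length-$2$ paths, forcing that entry to equal $1+1$ rather than $1$. No eigenvalue computation is needed---the contradiction is a five-line chase through entries.

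So the gap in your proposal is one of commitment rather than insight: you identified the right combinatorial mechanism but then pivoted to a spectral heuristic (``many distinct eigenvalues relative to the diameter'') for the counterexample search. That heuristic is related to the paper's positive results (few distinct eigenvalues suffice for the conjecture to hold), but it is not how the counterexample is built. Had you instead tried to engineer a small diameter-$2$ graph in which unique shortest paths force enough rigid multiplicative constraints to collide on a vertex pair with two shortest paths, you would have been led directly to a graph like $\G_{cx}$.
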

In other words, the conjecture states that it is always possible to reach average consensus \changes{with linear updates} in only $D(\G)$ steps.

We remark that the definitive consensus conjecture may be viewed as a statement \chg{on} the feasibility of certain polynomial
equalities. Indeed, developing Equation (\ref{eq:prodEq}) leads to a system of $N^{2}$ polynomial 
equations in the $D(\G)(N + 2|\E|)$ weights.
For instance, \chg{let us consider a simple graph of diameter $2$} such as $\mb{P}_{3}$ pictured on Fig. \ref{fig:path3}. In $\mb{P}_{3}$ there is no link between nodes $1$ and $3$, hence $a_{13}^{(t)}$ and $a_{31}^{(t)}$ must be zero. Thus, we have the liberty to choose the other 14 nonzero entries in order to fulfil 
\begin{eqnarray} \label{eq:p3}
\begin{pmatrix} a_{11}^{(2)} & a_{12}^{(2)} & 0 \\ a_{21}^{(2)} & a_{22}^{(2)} & a_{23}^{(2)} \\ 0 & a_{32}^{(2)} & a_{33}^{(2)} \end{pmatrix}  \begin{pmatrix} a_{11}^{(1)} & a_{12}^{(1)} & 0 \\ a_{21}^{(1)} & a_{22}^{(1)} & a_{23}^{(1)} \\ 0 & a_{32}^{(1)} & a_{33}^{(1)} \end{pmatrix}= {1 \over 3} \begin{pmatrix} 1 & 1 &1\\1 & 1 & 1\\1 & 1 &1 \end{pmatrix} \, . \nonumber
\end{eqnarray}
Developing the product on the left-hand side in this last equation yields a system of 9 polynomial equations in the 14 weights. In this system, the monomials in the polynomial equation of an entry $ij$ correspond to all the walks of length $D(\G)$ from $j$ to $i$.

\begin{figure}[h]
\begin{center}
\includegraphics[width=0.4\textwidth]{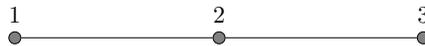}
\caption{\label{fig:path3} $\mb{P}_{3}$, the path graph on $3$ vertices. }
\end{center}
\end{figure}

%[Commented by Guillaume] \noindent This particular system actually admits an infinite number of solutions via scaling and also different types of solutions. For instance, one can easily retrieve two very different sets of weights by using either the Gather and Distribute strategy introduced in \cite[Section 4.2]{5026/THESES} or Proposition \ref{prop:paths} hereunder.

\subsection{Previous results \label{sec:previous}}
Different related schemes based on trees have been proposed \cite[Algorithm 3.3]{ChiKai}, \cite[Section 4.2]{5026/THESES} or the ConvergeCast algorithm.  \cite{5026/THESES} showed that these schemes take only $D(\mathcal{T})$ steps on any tree $\mathcal{T}$ hence, trees satisfy the definitive consensus conjecture. 
%[Commented by Guillaume] The proof is constructive and allows some intuition on the matrices. The key element is to discover that in any tree all nodes are at most at distance $\lfloor D(\mathcal{T})/2 \rfloor$ from either a unique {central node} or one of two adjacent central nodes (a result known as Jordan's Tree Theorem). Therefore, one can always gather the initial states at the central node(s), compute the average there, and distribute it using only $D(\mathcal{T})$ steps in total. A precise description of this \emph{Gather and Distribute} scheme can be found in \cite[Section 4.2]{5026/THESES} and it uses the same idea behind \cite[Algorithm 3.3]{ChiKai} or the ConvergeCast algorithm. 

The fact that the conjecture holds on trees induces that one can reach average consensus on any \chg{connected} graph with a number of updates equal to at most two times the radius \cite{5026/THESES}. Indeed, a spanning tree generated by Breadth First Search from a central node has a diameter of at most two times the radius of the original graph. Moreover, the definitive consensus conjecture holds on graphs admitting a \emph{Diameter Preserving Spanning Tree} (DPST),  which is a tree on the same set of nodes, using a subset of the edges and having the same diameter as the original graph. \chg{One can find} a characterization of graphs admitting a DPST in \cite{Buckley:1988fk}.

When the Gather and Distribute scheme cannot be applied in $D(\G)$ steps, one is left with a large system of polynomial equations in the weights. Such systems are not easy to solve. Gr\"{o}bner basis theory and Buchberger's algorithm are one way, but due to its exponential time complexity this algorithm becomes intractable even on small graphs. Some heuristics have also been proposed in \cite{5026/THESES}.

%\section{A counterexample to Conjecture \ref{defconsconj2} \label{sec:counterEx}}
\section{\label{sec:counterEx}\chg{The diameter lower bound is not always tight}}
We now demonstrate that the conjecture is false using the diameter-2 graph $\G_{cx}$ pictured on Fig. \ref{fig1}.
\begin{figure}[h!]
\begin{center}
\includegraphics[height=3.5cm]{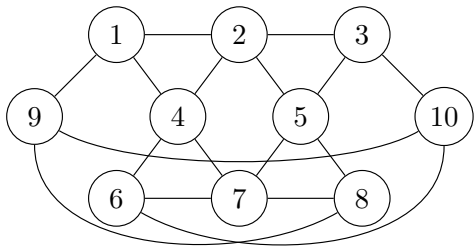}    
\caption{$\G_{cx}$, the counterexample.}  
\label{fig1}                                
\end{center}                             
\end{figure}

\begin{prop}
\chg{The graph $\G_{cx}$ has diameter 2 but} no two matrices $A, B$ compliant with $\G_{cx}$ can satisfy $AB = {1 \over N} \1 \1^{T}$.
\end{prop}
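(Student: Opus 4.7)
The plan is to exploit the sparsity constraint $A,B \in \mathcal{M}(\G_{cx})$ to convert $AB = \frac{1}{N}\1\1^T$ into a small explicit system of polynomial equations in the nonzero entries, and then to derive an algebraic contradiction by propagating the constraints through the neighborhood structure of $\G_{cx}$. Concretely, writing $N[i] := \{i\}\cup \mathcal{N}_i$ for the closed neighborhood, the compliance condition makes the $(i,j)$-entry of $AB$ collapse to
\[
(AB)_{ij} \;=\; \sum_{k \in N[i]\cap N[j]} A_{ik}\,B_{kj} \;=\; \frac{1}{N}.
\]
For those non-adjacent pairs $(i,j)$ in $\G_{cx}$ that share only one or two common neighbors, this sum reduces to one or two monomials equated to $1/N$, and these short equations will be the workhorse of the argument.

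A useful preliminary reduction is to show that both $A$ and $B$ must be singular. Indeed, if $A$ were invertible then $B = \frac{1}{N}(A^{-1}\1)\1^T$, so every row of $B$ would be a constant multiple of $\1^T$. The condition $B \in \mathcal{M}(\G_{cx})$ would then force any vertex with a nonzero row of $B$ to be adjacent to every other vertex; since $\G_{cx}$ has no such universal vertex (as read off from the figure), this would yield $B = 0$, contradicting $AB = \frac{1}{N}\1\1^T$. The symmetric argument applied to $B^{T}A^{T} = \frac{1}{N}\1\1^T$ shows that $B$ is singular too. This already restricts the search to singular pairs, which can be parametrized through their nontrivial kernels.

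The next step is to enumerate from the figure the non-adjacent pairs of $\G_{cx}$ together with their (small) common-neighbor sets, and to read off the resulting short sums. Each short equation $A_{ik}B_{kj}=1/N$ forces both factors to be nonzero, pinning down the support pattern of specific rows of $A$ and columns of $B$. Combining these forced nonzero entries with the equations coming from edges of $\G_{cx}$ and from the diagonal constraints $(AB)_{ii}=1/N$, I would look for a short linear combination of the polynomial equations that evaluates to $0$ on one side while equalling a nonzero multiple of $1/N$ on the other, which is the desired infeasibility.

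The main obstacle is the combinatorial bookkeeping: the obstruction is specific to the exact neighborhood structure of $\G_{cx}$, so the chain of eliminations cannot be made generic and must be tailored to the picture. In principle, infeasibility of this polynomial system on a graph this small can be certified mechanically by a Gr\"obner basis computation, but the challenge for a clean exposition is to distill a short, hand-verifiable chain of implications that makes the obstruction transparent rather than relying on a black-box elimination.
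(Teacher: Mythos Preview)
Your overall strategy is exactly the one the paper uses: exploit the compliance constraint so that $(AB)_{ij}=\sum_{k\in N[i]\cap N[j]} A_{ik}B_{kj}$, focus on non-adjacent pairs with one or two common neighbors, and chase the resulting monomial equations until they contradict each other. However, what you have written is a plan, not a proof: you explicitly stop at ``I would look for a short linear combination\ldots'' and flag the bookkeeping as the main obstacle. The missing piece is precisely that short chain, and in $\G_{cx}$ it is only five or six lines long. In the paper's labeling, vertices $1$ and $6$ share the unique common neighbor $4$, so $a_{64}b_{41}=1$; likewise $1$ and $7$ share only $4$, giving $a_{74}b_{41}=1$, hence $a_{64}=a_{74}$. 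Symmetrically, the pairs $(3,7)$ and $(3,8)$ share only $5$, forcing $a_{75}=a_{85}$. Then $(2,6)$ shares only $4$ and $(2,8)$ shares only $5$, giving $b_{42}=1/a_{64}=1/a_{74}$ and $b_{52}=1/a_{85}=1/a_{75}$. Finally $(2,7)$ has exactly the two common neighbors $4$ and $5$, so $(AB)_{72}=a_{74}b_{42}+a_{75}b_{52}=1+1=2$, contradicting $(AB)_{72}=1$. This is the concrete obstruction you were looking for; once you see which five entries to write down, no Gr\"obner machinery is needed.

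Your singularity paragraph is correct as stated (no vertex of $\G_{cx}$ is universal, so invertible $A$ or $B$ would force $B=0$ or $A=0$), but it is a detour: the paper's argument never uses it, and the monomial chain above already gives the contradiction directly without any appeal to kernels or rank.
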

\begin{proof} \changes{We let the reader verify that $\G_{cx}$ has diameter 2 and} proceed by contradiction. Suppose that there exist two ten by ten matrices $A = [a_{ij}], \, B = [b_{ij}]$ in $\mathcal{M}(\G_{cx})$ such that $AB = \1\1^{T}$ (the coefficient ${1 \over N}$ has been dropped here as any coefficient can be obtained by scaling the matrices).
An entry $[AB]_{ij}$ corresponds to the sum, over all walks of length two, of the product of the weights chronologically set on the crossed edges. For instance, there is only one path of length two from $1$ to $6$ and it passes through $4$. The same holds from $1$ to $7$ so these entries in $AB$ read as $\left[ AB \right]_{61} = a_{64} b_{41} = 1$ and $\left[ AB \right]_{71} = a_{74} b_{41} = 1$.
These two relations can hold only if $a_{64} = a_{74}$.
%From a graph perspective, this constraint on the weights derives from the fact that the two paths $1 \rightarrow 4 \rightarrow 6$ and $1 \rightarrow 4 \rightarrow 7$ are the only one of length $D(\G_{cx}) = 2$ between their extremal nodes and share the same first edge.
Similarly, one finds a corresponding relation by looking at the paths from $3$ to $7$ and  from $3$ to $8$: $a_{75} = a_{85}$. Then let us look at $[AB]_{62}$, we have $[AB]_{62} = a_{64}b_{42} = 1$ so $b_{42} = {1 \over a_{64}} = {1 \over a_{74} }$. Similarly we find $b_{52} = {1 \over a_{85}} = {1 \over a_{75}}$. Now, there are two paths of length two from $2$ to $7$, one through $4$ and one through $5$. Therefore, \chg{and by using the equalities obtained we get ${ [AB]_{72} = a_{74}b_{42} + a_{75}b_{52} = 1 + 1}$ which contradicts $AB = \1 \1^{T}$.} 
\end{proof}
\chg{Hence, $\G_{cx}$ is a graph of diameter 2 on which on which it is impossible to linearly reach average consensus in two steps.} Nevertheless, there are many graphs on which \chg{a number of steps equal to the diameter is enough} as we will see in the next section.

\section{\label{sec:suffcond}\chg{Graphs for which the diameter lower bound is tight}}
In this section we demonstrate that when there exists one matrix in $\mathcal{M}(\G)$ that satisfies some particular conditions \chg{then one can easily find $D(\G)$ matrices that comply with $\G$ and lead to average consensus}. We then give such a matrix for path graphs and distance-regular graphs and we illustrate these results on a distance-regular graph with 18 nodes.

\subsection{\chg{Solutions based} on one matrix compliant with $\G$}
We first state and prove a general theorem that highlights the core conditions needed on the matrix. Then, we prove a corollary that is \chg{more actionable}. 
Let us recall that the \emph{minimal polynomial} of a matrix is the least degree monic polynomial $p$ such that $p(M) = 0$; we denote it by $m_{M}(x)$ for a particular matrix $M$.
Cayley-Hamilton's Theorem shows that the minimal polynomial has the same roots as the characteristic polynomial, i.e., the eigenvalues, but with multiplicity equal to the largest size of a Jordan 
block corresponding to the eigenvalue.

\begin{thm}\label{theo:general}
Let $\G$ be a connected graph with diameter $D$. If there exists a matrix $M \in \mathcal{M}(\G)$ such that the three following conditions are satisfied,
\begin{enumerate}[(a)]
\item $\exists k \in \mathds{R}: {\mathrm{ker}(M-k\id) = \mathrm{ker}(M^{T} - k \id) =   \left\{ \alpha \1 | \alpha \in \mathds{R} \right\}}$; 
\item $m_{M}(x)$ has degree $D+1$;
\item $M$ has real eigenvalues, i.e., the roots $k, \lambda_{1}, \dots, \lambda_{D}$ of $m_{M}(x)$ are real;
\end{enumerate}
then the $D$ matrices $M - \lambda_{t}\id$ are in $\mathcal{M}(\G)$ and their product in any order is equal to a multiple of $\1 \1^{T}$. \\
\end{thm}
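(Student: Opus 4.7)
The plan is to let $P := (M - \lambda_D \id)(M - \lambda_{D-1}\id)\cdots(M - \lambda_1\id)$ and show that (a)--(c) force $P$ to be a nonzero scalar multiple of $\1\1^T$, independently of the order of the factors. That each $M - \lambda_t\id$ lies in $\mathcal{M}(\G)$ is immediate, since subtracting a multiple of $\id$ only alters the diagonal entries and leaves the off-diagonal sparsity pattern of $M$ unchanged. Moreover, the factors are polynomials in $M$ and hence pairwise commute, which already settles the order-independence claim and lets me fix the ordering above for the remainder of the argument.

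The next step is to identify the minimal polynomial of $M$. Condition (a) guarantees that $k$ is an eigenvalue of $M$, and therefore a root of $m_M$; conditions (b) and (c) state that $m_M$ has degree $D+1$ and splits over $\mathds{R}$ with roots $k,\lambda_1,\dots,\lambda_D$. Hence one may write $m_M(x) = (x-k)\prod_{t=1}^{D}(x-\lambda_t)$, and consequently
\[
(M-k\id)\,P \;=\; P\,(M-k\id) \;=\; m_M(M) \;=\; 0.
\]

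The heart of the argument is then to convert these two annihilation relations into a structural statement about $P$. From $(M-k\id)P = 0$, every column of $P$ lies in $\ker(M-k\id) = \mathrm{span}(\1)$, so every column of $P$ is constant. Transposing $P(M-k\id)=0$ gives $(M^T-k\id)P^T = 0$, and the second half of (a) forces every row of $P$ to be a multiple of $\1^T$; a matrix whose columns and rows are both constant must have the form $\alpha\1\1^T$. To conclude the proof I would observe that $\alpha \neq 0$: otherwise the polynomial $\prod_{t}(x-\lambda_t)$, of degree $D$, would annihilate $M$, contradicting that $m_M$ has degree $D+1$.

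I do not expect a serious obstacle in carrying out this plan; the only point that requires a little care is the factorization of $m_M$ in the second step, which needs (a) (to place $k$ among the roots of $m_M$) together with (b) and (c) (to account for all other roots as real values $\lambda_1,\dots,\lambda_D$). Everything else is a direct consequence of the fact that $\ker(M-k\id)$ and $\ker(M^T-k\id)$ are both exactly $\mathrm{span}(\1)$.
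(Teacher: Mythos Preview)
Your proof is correct and follows essentially the same approach as the paper: factor $m_M(x)=(x-k)q(x)$ with $q(x)=\prod_t(x-\lambda_t)$, use $(M-k\id)q(M)=0$ together with hypothesis (a) to force $q(M)$ to be a scalar multiple of $\1\1^{T}$, and argue nonvanishing via the minimality of $m_M$. The only cosmetic difference is in the step that equalizes the column scalars: the paper premultiplies $q(M)$ by $\1^{T}$ and uses $\1^{T}M=k\1^{T}$ to get $\1^{T}q(M)=q(k)\1^{T}$, whereas you use the commuted relation $P(M-k\id)=0$, transpose, and invoke the second half of (a) to make the rows constant as well.
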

\begin{proof}
By assumption $(a)$ and Cayley-Hamilton's Theorem, $(x-k)$ is a factor of $m_{M}(x)$. So, let $q(x) = { m_{M}(x) \over (x-k)}$. By definition of the minimal polynomial
$ m_{M}(M) = (M-k \id) q(M) = 0$.
Again by assumption $(a)$, this relation holds only if each column of $q(M)$ is a multiple of $\1$,
\begin{eqnarray} \label{eq:onescolumns}
 q(M) = \biggl[ \alpha_{1}\1 \,\, \alpha_{2} \1 \,\, \dots \,\, \alpha_{N} \1  \biggl] \, . 
 \end{eqnarray}
We now prove that the $\alpha_{i}$'s must be equal. Since $\1^{T}$ is a left-eigenvector of $M$, $\1^{T} p(M) = p(k) \1^{T}$ for any polynomial $p$. So, if we pre-multiply each side in (\ref{eq:onescolumns}) by $\1^{T}$ we get $q(k) \1^{T} = N (\alpha_{1} \,\, \alpha_{2} \,\, \cdots \,\, \alpha_{N} )$.
Thus, $ \alpha_{1} = \dots =  \alpha_{N} = { q(k ) \over N}$ and (\ref{eq:onescolumns}) can be written $q(M) = { q(k) \over N} \1 \1^{T}$. 
By assumptions $(b)$ and $(c)$, $q(M)$ can be factorized in $D$ real linear factors which taken on $M$ give the matrices $A^{(t)} \triangleq (M - \lambda_{t} \id) \, , t=1,\dots,D $.
These matrices are indeed compliant with $\G$ if $M$ is. Note that $q(k)\neq0$ as (\ref{eq:onescolumns}) would otherwise violate the definition of the minimal polynomial, so we can scale by $1/q(k)$ to obtain the claimed result. Finally, the order in which one applies the matrices has no importance as they commute. 
\end{proof}

Theorem \ref{theo:general} has a simple consequence if $M$ is chosen symmetric and nonnegative irreducible.

\begin{cor} \label{cor:suffcond}
Let $\G$ be a connected graph with diameter $D$. If there \changes{exists} a symmetric irreducible nonnegative matrix $M \in \mathcal{M}(\G)$ satisfying the two conditions
\begin{enumerate}[(I)]
\item $M\1 = k\1$ for some $k \in \mathds{R}$;
\item $M$ has only $D+1$ distinct eigenvalues,
\end{enumerate}
then there exist $D$ real matrices $A^{(t)}$ that comply with the graph and are such that ${A^{(D)}A^{(D-1)} \cdots A^{(1)} = {1 \over N} \1\1^{T}}$.
\end{cor}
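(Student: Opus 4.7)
The plan is to deduce the corollary by verifying that the matrix $M$ satisfies hypotheses (a), (b), (c) of Theorem \ref{theo:general}; the conclusion of that theorem then yields $D$ compliant matrices whose product is a (nonzero) multiple of $\1 \1^{T}$, which we rescale to $\frac{1}{N}\1\1^{T}$ by multiplying any one of the factors by the appropriate real constant (this does not affect compliance with $\G$). So the real content is to translate (I), (II) together with symmetry, irreducibility and nonnegativity into (a)--(c).

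Conditions (b) and (c) are essentially immediate. Since $M$ is real symmetric, it is orthogonally diagonalizable and all its eigenvalues are real, giving (c). Diagonalizability furthermore implies that $m_M(x)$ has no repeated roots, so its degree equals the number of distinct eigenvalues of $M$; by hypothesis (II) this number is $D+1$, which is (b). Note that among those $D+1$ real roots, one of them is the value $k$ appearing in (I).

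The only step that requires a substantive argument is (a). Symmetry of $M$ makes $\mathrm{ker}(M-k\id)$ and $\mathrm{ker}(M^{T}-k\id)$ coincide, so it suffices to show the former is one-dimensional and spanned by $\1$. By (I) the positive vector $\1$ lies in $\mathrm{ker}(M-k\id)$. Because $M$ is nonnegative and irreducible, the Perron--Frobenius theorem states that its spectral radius is a simple eigenvalue and that it is the unique eigenvalue admitting a strictly positive eigenvector. Since $k$ admits the strictly positive eigenvector $\1$, it must be this Perron eigenvalue, hence simple, and its eigenspace is spanned by $\1$. This establishes (a).

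With (a)--(c) in hand, Theorem \ref{theo:general} produces real matrices $A^{(t)} = M - \lambda_{t}\id \in \mathcal{M}(\G)$ for $t=1,\ldots,D$, where $\lambda_{1},\ldots,\lambda_{D}$ are the roots of $m_{M}(x)/(x-k)$, and their product equals $\frac{q(k)}{N}\1\1^{T}$ with $q(k)\neq 0$. Dividing one of the factors by $q(k)$ gives the desired identity $A^{(D)}\cdots A^{(1)} = \frac{1}{N}\1\1^{T}$. I do not foresee any genuine obstacle; the only place where care is needed is invoking Perron--Frobenius correctly to conclude simplicity of $k$ from the existence of the positive eigenvector $\1$, rather than assuming a priori that $k$ is the largest eigenvalue.
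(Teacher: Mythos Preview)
Your argument is correct and follows the same route as the paper: you verify hypotheses (a)--(c) of Theorem~\ref{theo:general} using symmetry for (b) and (c) and Perron--Frobenius for (a), then invoke the theorem and rescale. The paper's proof is essentially identical, perhaps only slightly terser about the rescaling step; your explicit remark that $\1$ being a positive eigenvector forces $k$ to be the Perron root (rather than assuming this) is a nice point of care.
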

\begin{proof}
We check that all the conditions in Theorem \ref{theo:general} are met. As $M \1 = k \1$ and $M = M^{T}$, $\1$ is in $\mathrm{ker}(M-k \id)$ and in $\mathrm{ker}(M^{T}- k\id)$. The fact that these eigenspaces have dimension one follows from Perron-Frobenius' Theorem since $\1$ is a positive eigenvector, and $M$ is nonegative irreducible. Hence, condition $(a)$ is met.
Now we look at conditions $(b)$ and $(c)$.The matrix $M$ is diagonalizable since real and symmetric. Thus, all the Jordan blocks in its Jordan normal form have dimension one. So by Cayley-Hamilton, the degree of the minimal polynomial equals the number of distinct eigenvalues. Finally, the eigenvalues are real because $M$ is symmetric. 
\end{proof}
The main challenge in finding a matrix in $\mathcal{M}(\G)$ that fulfils the hypotheses of Corollary \ref{cor:suffcond} is condition $(II)$. The other conditions can easily be met with a matrix in $\mathcal{M}(\G)$. For instance, if $A$ is the adjacency matrix of a connected graph then $M = A - \mathrm{diag}( A\1 ) + \max(A\1) \id $ is symmetric nonnegative and irreducible. Indeed, a nonnegative matrix $B \in \mathds{R}^{N\times N}$ is irreducible if and only if the digraph on $N$ nodes with arc set $\left\{ (i,j) \left|  \, [B]_{ji} >0 \right. \right\}$ is strongly connected. The digraph corresponding to the nonzero entries of this particular $M$, and any matrix that has the same nonzero entries as $A$, is a directed version of $\G$ which is strongly connected because $\G$ is connected. One can easily check that $M = A - \mathrm{diag}( A\1 ) + \max(A\1) \id $ also has $\1$ as eigenvector.

We end this section with a Proposition that sheds some light on conditions $(b)$ and $(II)$ and might help in proving that they are satisfied. Let us first define the \emph{algebra generated by a matrix}; $\mathcal{A}_{M}  =  \left\{ p(M) \in \mathds{R}^{N\times N} \left| \, p \in \mathds{R}[x] \right. \right\}$ where $\mathds{R}[x]$ denotes the set of univariate polynomials with real coefficients. This set $\mathcal{A}_{M}$ is an algebra because it has the structure of a real vector space with a bilinear law of composition between vectors. Indeed, the powers of $M$ can be seen as vectors and the matrix multiplication yields a bilinear product from $\mathcal{A}_{M} \times \mathcal{A}_{M}$ to $\mathcal{A}_{M}$. Note that by definition, the degree $d$ of the minimal polynomial $m_{M}(x)$ is the dimension of $\mathcal{A}_{M}$. Indeed, it gives the first power $M^{d}$ that can be generated by a linear combination of the powers $0,\dots,d-1$. Many \changes{interesting} proofs in algebraic graph theory use the algebra generated by the adjacency matrix of a graph. The following Proposition \changes{is a straightforward generalization} of one such result to the matrices that comply with a graph or a digraph, \changes{its proof is included for the sake of completeness.} 
\begin{prop} \label{lem:dimalg}
For any matrix $M \in \mathcal{M}(\G)$ such that there exist $p \in \mathds{R}[x]$ with $p(M)$ having no zero entry it holds that $\mathrm{dim}(\mathcal{A}_{M}) \geq D(\G) + 1$.
\end{prop}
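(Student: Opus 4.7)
The plan is to exploit the fact that membership in $\mathcal{M}(\G)$ forces a strong sparsity pattern on the powers of $M$, which is then inherited by every polynomial $p(M)$ of sufficiently low degree.

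First, I would establish the basic sparsity lemma: if $M \in \mathcal{M}(\G)$, then for any nonnegative integer $k$ and any pair of vertices $i,j$, the entry $[M^{k}]_{ij}$ can be expanded as a sum of products of entries of $M$ along length-$k$ walks from $j$ to $i$ in the digraph obtained from $\G$ by orienting each edge in both directions and adding self-loops at every vertex. Each non-self-loop step of such a walk traverses an edge of $\G$, so moving from $j$ to a distinct vertex $i$ requires at least $\delta(i,j)$ non-self-loop steps, hence at least $\delta(i,j)$ steps in total. Consequently $[M^{k}]_{ij} = 0$ whenever $k < \delta(i,j)$ (noting also that $[M^{0}]_{ij} = [\id]_{ij} = 0$ since $i\neq j$).

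Next, I would pick two vertices $i^{*}, j^{*}$ realizing the diameter, i.e., $\delta(i^{*},j^{*}) = D(\G)$, and apply the sparsity lemma to conclude that
\begin{eqnarray*}
[\id]_{i^{*}j^{*}} = [M]_{i^{*}j^{*}} = [M^{2}]_{i^{*}j^{*}} = \cdots = [M^{D(\G)-1}]_{i^{*}j^{*}} = 0.
\end{eqnarray*}

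Finally, I would argue by contradiction. Suppose $\dim(\mathcal{A}_{M}) \leq D(\G)$. Since $\dim(\mathcal{A}_{M})$ equals the degree of $m_{M}(x)$, the set $\{\id, M, M^{2}, \dots, M^{D(\G)-1}\}$ spans $\mathcal{A}_{M}$, so every element $p(M) \in \mathcal{A}_{M}$ is a real linear combination of these powers. Evaluating the $(i^{*}, j^{*})$ entry of such a linear combination gives $0$ by the previous step, which contradicts the hypothesis that some $p(M)$ has no zero entry. Hence $\dim(\mathcal{A}_{M}) \geq D(\G) + 1$, which is the claim.

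There isn't really a hard step here; the only thing one needs to be careful about is the walk-counting interpretation of powers of a matrix with the sparsity pattern of $\mathcal{M}(\G)$ (in particular, that self-loops are allowed but do not help shorten a walk between distinct vertices), and handling the $k=0$ case via $\id$ rather than through walks.
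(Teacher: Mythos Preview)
Your proof is correct and follows essentially the same approach as the paper: both rely on the sparsity observation that $[M^{k}]_{ij}=0$ whenever $k<\delta(i,j)$, applied at a diametral pair, together with the fact that $\dim(\mathcal{A}_{M})$ equals the degree of $m_{M}$. The only cosmetic difference is that the paper first introduces the least-degree polynomial $p$ with $p(M)$ entrywise nonzero and shows its degree is at least $D(\G)$ before concluding, whereas you go directly to the contradiction; your version is arguably the cleaner of the two.
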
 
\begin{proof}
Let $p \in \mathds{R}[x]$ be a least degree monic polynomial such that $p(M)$ has no zero entry, denote its degree by $d$. We claim that $d \geq D(\G)$.
Indeed, $d$ cannot be less than the diameter since \changes{if node $j$ is at distance $D(\G)$ from $i$ then} $[M^{k}]_{ij}$ is zero for  $k<D(\G)$. Finally, as a linear combination of $\{I, M, \ldots, M^{d}\}$ is needed to have a matrix with no zero entries, one can show by contradiction that the powers $0$ to $d$ must be linearly independent, so $\mathrm{dim}(\mathcal{A}_{M}) \geq d+1\geq D(\G)+1$.   
\end{proof}
Proposition \ref{lem:dimalg} shows that conditions $(b)$ and $(II)$ correspond to a lower bound on the degree of the minimal polynomial of a matrix that satisfies the other conditions.
\changes{\begin{remark}
Theorem \ref{theo:general} and Proposition \ref{lem:dimalg} are also valid on digraphs if one adapts the definitions of $\mathcal{M}(\G)$ to arcs and the notion of diameter. For instance, without going into details, the adjacency matrix of a De Bruin digraph is a matrix that satisfies Theorem  \ref{theo:general} for this digraph.
\end{remark}}
\subsection{Paths and distance-regular graphs}
\chg{We now apply Corollary \ref{cor:suffcond} on path graphs with the following Proposition}. Note that the schemes discussed in Subsection \ref{sec:previous} already provide weights leading to average consensus on a path network. Nevertheless, this Proposition shows a different and arguably more decentralized way to reach average consensus on such a network.

\begin{prop} \label{prop:paths}
Let $\mb{P}_{N}$ be the path graph on $N$ nodes with adjacency matrix $A_{P_{N}}$ such that nodes $1$ and $N$ have degree one. Then $M = A_{P_{N}} + \mathrm{diag}\{1,0,\dots,0,1\} $
has $N$ real distinct eigenvalues $2,\lambda_{1},\dots,\lambda_{N-1}$ and ${(M-\lambda_{N-1}\id) \cdots (M - \lambda_{1} \id) = \1 \1^{T} }$.
\end{prop}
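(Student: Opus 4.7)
The plan is to apply Corollary~\ref{cor:suffcond} to the given matrix $M$. By construction $M$ is tridiagonal, symmetric, nonnegative, and complies with $\mb{P}_{N}$; since the path is connected, $M$ is also irreducible. So I only have to verify the two algebraic hypotheses $(I)$ and $(II)$ of the corollary.

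Condition $(I)$ is routine: a direct row-sum check gives $M\1 = 2\,\1$, since each interior row contributes two off-diagonal ones, and each of the two boundary rows contributes one off-diagonal one plus the added $1$ on the diagonal. Thus the common eigenvalue is $k = 2$.

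The main obstacle is condition $(II)$: since $D(\mb{P}_N)=N-1$, one must produce exactly $N$ distinct eigenvalues of $M$. I propose to do this via an explicit ansatz. The eigenvalue equation $Mv = \lambda v$ is the three-term recurrence $v_{i-1} + v_{i+1} = \lambda v_i$ for interior $i$, together with the boundary equations $v_2 = (\lambda-1)v_1$ and $v_{N-1} = (\lambda-1)v_N$. Trying $v_i = \cos\bigl((i-\tfrac{1}{2})\theta\bigr)$ with $\lambda = 2\cos\theta$, the product-to-sum identity immediately yields the interior recurrence, and a one-line trigonometric calculation shows that the $i=1$ boundary condition is automatically satisfied by the half-integer shift. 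The $i=N$ boundary condition then reduces to $\cos\bigl((N+\tfrac{1}{2})\theta\bigr) = \cos\bigl((N-\tfrac{1}{2})\theta\bigr)$, which forces $\theta \in \{k\pi/N : k = 0, 1, \dots, N-1\}$. This yields $N$ distinct real eigenvalues $\lambda_k = 2\cos(k\pi/N)$ with nontrivial eigenvectors, so $(II)$ is satisfied with exactly $D+1 = N$ distinct eigenvalues.

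Corollary~\ref{cor:suffcond} then produces the $D = N-1$ matrices $M - \lambda_t \id$ compliant with $\mb{P}_N$ whose product equals $(q(2)/N)\,\1\1^T$, where $q(x) = \prod_{t=1}^{N-1}(x - \lambda_t)$ (as in the proof of Theorem~\ref{theo:general}). The final step is to verify that $q(2) = N$, which yields the stated normalization. For this I would observe that $2\id - M$ is exactly the Laplacian $L$ of $\mb{P}_N$; hence $q(2)$ is the product of the nonzero eigenvalues of $L$, which by Kirchhoff's Matrix-Tree Theorem equals $N \cdot \tau(\mb{P}_N) = N$, since a path has a unique spanning tree (itself). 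This closes the proof.
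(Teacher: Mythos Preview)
Your argument is correct, but it diverges from the paper's in two places. First, for condition~$(II)$ the paper does not compute eigenvectors: it invokes Proposition~\ref{lem:dimalg} to get $\dim(\mathcal{A}_M)\ge D(\mb{P}_N)+1=N$, and then Cayley--Hamilton to get $\dim(\mathcal{A}_M)\le N$, so equality forces $N$ distinct eigenvalues without ever identifying them. Your trigonometric ansatz is more concrete --- it actually produces the spectrum $\{2\cos(k\pi/N)\}$ --- but it is also more labor, and it bypasses the machinery the paper set up precisely for this purpose. Second, for the normalization constant the paper simply reads off the $(1,N)$ entry of the product: each factor $M-\lambda_t\id$ is tridiagonal with off-diagonal entries equal to~$1$, and there is a unique length-$(N-1)$ walk from $N$ to $1$, so that entry is~$1$. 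Your route through $2\id - M = L$ and the Matrix-Tree Theorem is a nice observation and equally valid, though it imports a result the paper does not otherwise use. In short, both proofs are sound; the paper's stays internal to the tools it has built, while yours trades that self-containment for explicit spectral information.
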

\begin{proof} We prove that $M$ satisfies the hypotheses of Corollary \ref{cor:suffcond}. It is readily seen that $M$ is nonnegative, symmetric, in $\mathcal{M}(\mb{P}_{N})$ and such that $M\1 = 2 \1$. Moreover, $M$ is irreducible as the digraph on $N$ nodes with arc set $\left\{ (i,j) | \, [M]_{ji} >0 \right\}$ corresponds to the directed version of $\mb{P}_{N}$ which is strongly connected. It remains to prove that $M$ has $D(P_{N}) + 1$ distinct eigenvalues which we do by applying Proposition \ref{lem:dimalg}. The sum of the powers $0$ to $N$ of $A_{P_{N}}$ has no zero entries because any two nodes are at most at distance $N$. Hence, given the definition of $M$, $(M+\id)^{N}$ has no zero entries. Thus, by Proposition \ref{lem:dimalg}, $D(\mb{P}_{N}) + 1 \leq \mathrm{dim}(\mathcal{A}_{M})$. On the other hand, $D(\mb{P}_{N}) = N-1$ and Cayley-Hamilton's Theorem implies that $\mathrm{dim}(\mathcal{A}_{M}) \leq N$ hence, $\mathrm{dim}(\mathcal{A}_{M}) = N$. Then, by Corollary \ref{cor:suffcond}, 
$ (M-\lambda_{N-1}\id) \cdots (M - \lambda_{1} \id) = { \alpha \over N} \1\1^{T}$. 
Finally, one can show that ${ \alpha \over N}$ must be equal to $1$ by computing the entry $\left[(M-\lambda_{N-1}\id) \cdots (M - \lambda_{1} \id) \right]_{1N}$. 
\end{proof} 

We can also directly apply Corollary \ref{cor:suffcond} on a large class of graphs called distance-regular graphs. We refer the reader to \cite[Chapter 20]{Biggs:1994kx} for a clear introduction to this class. For instance, the cycles, the Petersen graph, the hypercube graphs and the complete bipartite graphs are distance-regular. Many other examples of such graphs can be found in \cite{WolframDistReg}. For the sake of completeness we provide their definition in which $\G_{r}(i)$ denotes the set of nodes at distance $r$ from node $i$ in $\G$.
\begin{defn} \label{def:distreg}
A  connected graph $\G$ with diameter $D$ is \emph{distance-regular} if there exist integers \\
$\left\{ b_{0},b_{1},\dots,b_{D-1}; c_{1}, c_{2},\dots,c_{D} \right\}$
such that for any pair $i,$ $j$ of $r$-distant vertices we have
\begin{enumerate}
\item  for $1\leq r \leq D$, $c_{r}$ is the number of vertices in $\G_{r-1}(j)$ that are adjacent to $i$;
\item  for $0 \leq r \leq D-1$, $b_{r}$ is the number of vertices in $\G_{r+1}(j)$ that are adjacent to $i$. 
\end{enumerate}
\end{defn}
From the definition, one can see that all distance-regular graphs are regular of degree $b_{0}$, which will be used in proving Proposition \ref{prop:distreg}.

\begin{prop}\label{prop:distreg}
Let $\G$ be a distance-regular graph, then there exist $D(\G)$ matrices in $\mathcal{M}(\G)$ whose product equals a multiple of $\1\1^{T}$.
\end{prop}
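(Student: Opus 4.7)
The plan is to apply Corollary \ref{cor:suffcond} directly to the adjacency matrix $A$ of $\G$. The easy verifications go first: $A$ is symmetric (since $\G$ is undirected), nonnegative, in $\mathcal{M}(\G)$, and irreducible because $\G$ is connected. Since distance-regularity forces $\G$ to be regular of degree $b_0$, we immediately get $A\1 = b_0 \1$, so condition (I) holds with $k = b_0$. Everything then hinges on verifying condition (II), namely that $A$ has exactly $D+1$ distinct eigenvalues.

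Because $A$ is symmetric and therefore diagonalizable, the number of distinct eigenvalues equals $\dim(\mathcal{A}_A)$, so it suffices to show $\dim(\mathcal{A}_A) = D+1$. The lower bound $\dim(\mathcal{A}_A) \geq D+1$ follows directly from Proposition \ref{lem:dimalg}: since any two vertices are at distance at most $D$, the polynomial $p(x) = 1 + x + x^2 + \cdots + x^D$ evaluated at $A$ has no zero entry.

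For the matching upper bound I would introduce the distance matrices $A_r$ defined by $[A_r]_{ij} = 1$ if $\delta(i,j) = r$ and $0$ otherwise, for $r = 0,\dots,D$. These $D+1$ matrices are linearly independent (their supports are disjoint and none is zero, as each distance up to $D$ is realized). The distance-regularity hypothesis gives the intersection recursion
\begin{equation*}
A\, A_r = b_{r-1} A_{r-1} + a_r A_r + c_{r+1} A_{r+1}, \qquad a_r := b_0 - b_r - c_r,
\end{equation*}
with the boundary terms omitted when $r = 0$ or $r = D$. Starting from $A_0 = \id$ and $A_1 = A$ and using that $c_{r+1} \neq 0$ for $r < D$, one solves this recursion inductively to express each $A_r$ as a polynomial of degree $r$ in $A$. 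In particular $\mathrm{span}\{A_0,\dots,A_D\}$ is stable under left multiplication by $A$, hence contains every power of $A$, which gives $\mathcal{A}_A \subseteq \mathrm{span}\{A_0,\dots,A_D\}$ and therefore $\dim(\mathcal{A}_A) \leq D+1$.

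Combining the two bounds yields $\dim(\mathcal{A}_A) = D+1$ and hence exactly $D+1$ distinct eigenvalues for $A$, so Corollary \ref{cor:suffcond} produces the $D$ compliant matrices whose product is a multiple of $\1\1^T$. The main obstacle is proving condition (II); everything else is routine. I expect the cleanest presentation to be the recursion argument above, since it makes the role of the intersection numbers $b_r, c_r$ from Definition \ref{def:distreg} explicit. A purely eigenvalue-based argument (invoking the fact that the Bose--Mesner algebra of a distance-regular graph has dimension $D+1$) would be shorter but would rely on more theory than the paper has developed so far.
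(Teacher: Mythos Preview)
Your proposal is correct and follows essentially the same route as the paper: apply Corollary~\ref{cor:suffcond} with $M=A$, checking symmetry, nonnegativity, irreducibility, and $A\1=b_0\1$, and then verifying that $A$ has exactly $D+1$ distinct eigenvalues. The only difference is that the paper dispatches condition~(II) by citing \cite[Theorem 20.7]{Biggs:1994kx}, whereas you unpack that citation by writing out the distance-matrix recursion $A A_r = b_{r-1}A_{r-1}+a_rA_r+c_{r+1}A_{r+1}$ to show $\mathcal{A}_A=\mathrm{span}\{A_0,\dots,A_D\}$; this makes your argument self-contained at the cost of a few extra lines.
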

\begin{proof}
\chg{Theorem 20.7 in \cite{Biggs:1994kx} implies that the adjacency matrix $A$ of any distance-regular graph $\G$ has $D(\G)+1$ distinct eigenvalues (this result due to R. M. Damerell uses the algebra generated by the powers of $A$). It is readily seen that} $A$ is nonnegative, symmetric and irreducible. Moreover, as any distance-regular graph is $b_{0}$-regular, we have $A\1 = b_{0} \1$. Thus, Corollary \ref{cor:suffcond} is applicable on any such graph simply with $M = A$. 
\end{proof}
As an example, let us consider the Pappus graph pictured on Fig. \ref{fig:pappus}. 
\begin{figure}[h!]
\begin{center}
\includegraphics[height=3.5cm]{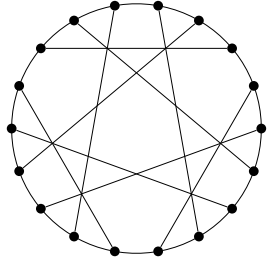}    
\caption{The Pappus graph.} 
\label{fig:pappus}                            
\end{center}                                 
\end{figure}
It has $18$ nodes, diameter $4$ and is distance-regular with integers $\{3,2,2,1; \, 1,1,2,3 \}$ \cite{WolframDistReg}. Its adjacency matrix $A$ has the spectrum (with multiplicities) $\{ -3^{(1)}, -\sqrt{3}^{(6)}, 0^{(4)}, \sqrt{3}^{(6)}, 3^{(1)} \}$. So, the four weight matrices 
\begin{equation}
\begin{array}{lr} 
A^{(1)} = {1 \over 6}\left(A + 3 \id\right),\, & A^{(2)} = {1 \over 3 + \sqrt{3}}\left(A + \sqrt{3} \id\right), \\
\,A^{(3)} = {1 \over 3}A,\, & A^{(4)}  =  {1 \over 3 - \sqrt{3}}\left(A - \sqrt{3} \id \right),
\end{array} \nonumber
\end{equation}
lead the Pappus network to average consensus. The evolution of $18$ states according to these weights and starting from an $\mathbf{x}^{(0)}$ uniformly distributed in $[-1.5,1.5]$ is illustrated on Fig. \ref{fig:pappusconv}.
\begin{figure}[h!]
\begin{center}
\includegraphics[width=0.8\textwidth]{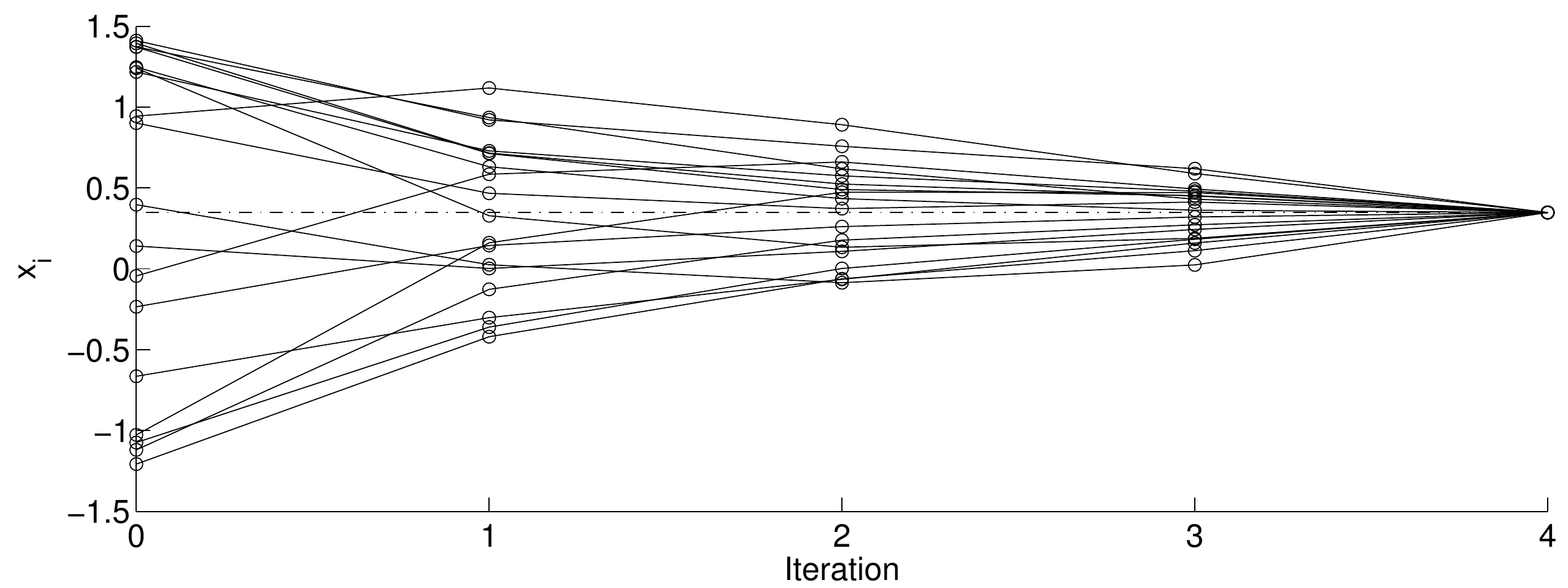}    
\caption{Evolution of $18$ uniformly distributed initial states on the Pappus graph using the given weights (circles linked by plain lines). The dashed line has height equal to the average of the initial states.}
\label{fig:pappusconv}                                 
\end{center}                                
\end{figure}

\changes{Finally, we remark on an interpretation of the last proposition, namely that undirected, connected graphs \changes{with symmetry among pairs of nodes at a particular distance from each other} satisfy the conjecture. \changes{For instance}, one notion of a highly symmetric graph is {\em distance transitivity}: given any four nodes $a,b$ and $u,v$ such that the distance from $a$ to $b$ is the 
same as the distance from $u$ to $v$, there is a graph automorphism\footnote{An automorphism of a graph is a relabeling of 
vertices that preserves the presence of edges. Formally, it is a map $\phi$ from the vertex set to itself such that $(i,j)$ is an edge in the
graph if and only if $(\phi(i), \phi(j))$ is. The set of all automorphisms form a group whose size is a measure of the 
symmetry of the graph.} which carries $a$ to $u$ and $b$ to $v$. Intuitively, the group of automorphisms of a distance-transitive
graph is rich enough so that any pair of vertices can be mapped by a suitable automorphism to every other pair at the same distance. We 
refer the reader to \cite{Biggs:1994kx} for more background and examples of these graphs.}

\changes{ \changes{It can be seen that} all distance-transitive graphs are distance-regular \changes{\cite{Biggs:1994kx}}, and consequently all distance-transitive graphs satisfy the definitive consensus conjecture by the last proposition}. 

\section{Finite-time linear average consensus and the consensus number of a graph\label{last}}
\chg{In the previous section we proved Corollary \ref{cor:suffcond} and used it to exhibit graphs on which $D(\G)$ compliant matrices leading to consensus can be found.} This same corollary might be adapted to easily find a finite number (maybe larger than the diameter) of matrices leading a fixed network of agents to average consensus. 

 \begin{cor}  \label{cor:beyondconj}
Let $\G$ be a connected graph, if there exists a symmetric irreducible nonnegative matrix $M \in \mathcal{M}(\G)$, with $\1$ as eigenvector and $s$ distinct eigenvalues
then, there exist $s-1$ real matrices $A^{(t)}$ that comply with the graph and are such that $A^{(s-1)} \cdots A^{(1)} = {1 \over N} \1\1^{T}$.
 \end{cor}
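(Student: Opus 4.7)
The plan is to follow the strategy used for Corollary \ref{cor:suffcond}, observing that condition $(b)$ in Theorem \ref{theo:general} (minimal polynomial of degree $D+1$) plays no structural role in the argument; it only counts how many linear factors appear when one splits $m_M(x)$. If instead one reads off the actual degree of $m_M$, the same construction produces exactly that many matrices (minus one, for the factor $(x-k)$).

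First, I would verify that the hypotheses imply a version of condition $(a)$ of Theorem \ref{theo:general}. Since $M\1 = k\1$ for some $k\in\mathds{R}$ and $M$ is symmetric, both $\mathrm{ker}(M-k\id)$ and $\mathrm{ker}(M^{T}-k\id)$ contain $\1$. Because $M$ is nonnegative and irreducible, Perron-Frobenius forces the Perron eigenvalue to be simple and to admit $\1$ as its eigenvector (which is positive), hence $k$ is precisely that Perron eigenvalue and each of those kernels equals $\{\alpha\1 \mid \alpha\in\mathds{R}\}$.

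Second, since $M$ is real symmetric it is diagonalizable with real eigenvalues, so $m_{M}(x)$ has degree equal to the number of distinct eigenvalues, namely $s$, and factors as
\[
m_{M}(x)=(x-k)\prod_{t=1}^{s-1}(x-\lambda_{t}),
\]
with all $\lambda_{t}\in\mathds{R}$. Setting $q(x)=m_{M}(x)/(x-k)$, the computation appearing in the proof of Theorem \ref{theo:general} carries over verbatim: from $(M-k\id)q(M)=0$ together with condition $(a)$ one obtains that every column of $q(M)$ is a scalar multiple of $\1$, and premultiplying by $\1^{T}$ (a left-eigenvector of $M$ for eigenvalue $k$) forces all these scalars to be equal, so $q(M)=(q(k)/N)\1\1^{T}$, with $q(k)\neq 0$ by minimality of $m_{M}$.

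Third, the $s-1$ factors $A^{(t)}\triangleq M-\lambda_{t}\id$ lie in $\mathcal{M}(\G)$ since $M$ does, they commute so their order is irrelevant, and their product equals $(q(k)/N)\1\1^{T}$. Scaling one of the factors by $N^{2}/q(k)$ absorbs the constant and yields the stated equality $A^{(s-1)}\cdots A^{(1)}=(1/N)\1\1^{T}$. The only ``obstacle'' is conceptual and in fact very minor: one must recognize that the numeric value $D+1$ in condition $(b)$ of Theorem \ref{theo:general} was only parametrizing the count of linear factors, not enabling any essential step of the kernel/factorization argument, so the whole proof transports to arbitrary $s$.
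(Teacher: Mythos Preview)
Your argument is correct and is exactly the paper's approach: the paper merely says that proving this ``goes the same way as for Corollary \ref{cor:suffcond} but with $D(\G)$ replaced by $s$,'' which is precisely what you carry out. One trivial slip: since $q(M)=(q(k)/N)\,\1\1^{T}$, the scaling factor needed to reach $(1/N)\,\1\1^{T}$ is $1/q(k)$, not $N^{2}/q(k)$.
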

Proving this goes the same way as for Corollary \ref{cor:suffcond} but with $D(\G)$ replaced by $s$.
For instance, on the counterexample $\G_{cx}$, the matrix $M = A_{\G_{cx}} - \mathrm{diag}(A_{\G_{cx}} \1) + 4 \id $ fulfils the hypotheses of Corollary \ref{cor:beyondconj} with $M\1=4\1$ and $s=4$. %The spectrum of $M$, with multiplicities, is $\{-2^{(1)}, -1^{(4)}, 2^{(4)}, 4^{(1)} \}$. 
Hence, average consensus is linearly achievable on $\G_{cx}$ in three steps, that is one more than its diameter.

This matrix $M \triangleq A - \mathrm{diag}( A\1 ) + \max(A\1) \id$ (where $A$ is the adjacency matrix of the graph) is the one that has been analyzed for finite-time average consensus in \cite{Kibang,6315398}. The author proves there that average consensus is always achievable in a number of steps equal to $s-1$ when $s$ is the number of distinct eigenvalues of  that particular $M$. His proof relies on joint diagonalization of matrices of the form $\alpha_{i} \id + \beta M$. On some graphs, this matrix does not achieve the minimum number of distinct eigenvalues. For instance, the graph on Fig. \ref{fig:double5cyc} is such that $A - \mathrm{diag}( A\1 ) + \max(A\1) \id$ has five distinct eigenvalues but we could find a matrix fulfilling Corollary \ref{cor:beyondconj} with only three distinct eigenvalues (which also equals its diameter plus one hence, this graph fulfills the conjecture).

\begin{figure}[!h]
\begin{center}
\includegraphics[scale=1]{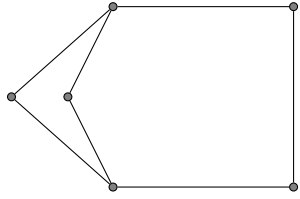}
\caption{\label{fig:double5cyc} A graph such that $A - \mathrm{diag}( A\1 ) + \max(A\1) \id$ does not achieve the minimum number of distinct eigenvalues over the set of symmetric nonnegative irreducible matrices in $\mathcal{M}(\G)$.  }
\end{center}
\end{figure}

The fact that the \chg{diameter lower bound is not always tight} raises the question of the least number of steps necessary to linearly achieve average consensus on a connected graph. We call this least integer the \emph{consensus number} of a graph. It is clear that the consensus number is lower bounded by the diameter of the graph but sometimes strictly bigger. Given the counterexample $\G_{cx}$ (Fig. \ref{fig1}) and the defining properties of distance-regular graphs, we might suspect that this parameter is linked to the number of edge-disjoint shortest-paths of maximum length. From Section \ref{sec:previous} we know that the consensus number is upper-bounded by $2R(\G)$.  Moreover, Corollary \ref{cor:beyondconj} shows that it is upper-bounded by $s-1$ for $s$ the number of distinct eigenvalues of any symmetric irreducible stochastic matrix in $\mathcal{M}(\G)$, which links our problem to the contemporary Inverse Eigenvalue Problem on a Graph \cite{1162.05333}. \changes{It is an open question \chg{whether} this graph invariant can be efficiently computed or if it can be linked to other graph invariants.}

\section{Conclusion} 
We have provided a counterexample to the definitive consensus conjecture. Thus, average consensus is not always achievable
\changes{with linear consensus protocols}  in a number of steps equal to the diameter of the graph, even though this number is sufficient for all the agents to influence each other. \changes{This is the first lower bound for linear consensus protocols on a graph which improves on the diameter of the graph}. However, we have shown that finding one matrix in $\mathcal{M}(\G)$ meeting some algebraic conditions could allow to easily find matrices leading to average consensus in finite time. These solutions are given by $D(\G)$ (respectively $s-1$) linear matrix factors if the matrix has a minimal polynomial of degree $D(\G)+1$ (respectively $s$). \changes{Since the adjacency matrix of a distance-regular graph has} a minimal polynomial of degree $D(\G)+1$, it follows that the definitive consensus conjecture holds on all distance-regular graphs. \changes{As mentioned earlier, this implies that any sufficiently symmetric graph in the sense of having an automorphism group rich enough to map any pair of vertices to any other pair at the same distance satisfies the definitive consensus conjecture}. Finally, we introduced the consensus number of a graph, which is the minimum number of iterations with which average consensus is linearly achievable on the graph. \changes{We believe that this parameter is linked to other graph-theoretic parameters and leave the question of its computability open.}

\changes{In this paper, we were concerned with finding matrices centrally at design-time. When nodes have enough memory and computing power, the decentralized methods for retrieving the Laplacian's spectrum presented in \cite{6315398} and the references therein could be adapted to let the nodes decentrally adapt their weights for future finite-time consensus iterations. }  

\newpage
\bibliographystyle{plain}        
\bibliography{Bibli_Paper} 

\end{document}